\crefname{lemma}{Lemma}{Lemmas}
\crefname{hypothesis}{Hypothesis}{Hypotheses}
\title{Computation of parabolic cylinder functions having complex argument}
\author{T. M. Dunster\thanks{Department of Mathematics and Statistics, San Diego State University, 5500 Campanile Drive, San Diego, CA 92182, USA. 
  (\email{mdunster@sdsu.edu}, \url{https://tmdunster.sdsu.edu}).}
\and A. Gil\thanks{Departamento de Matem\'atica Aplicada y CC. de la Computaci\'on, ETSI Caminos, Universidad de Cantabria, 39005-Santander, Spain. 
  (\email{amparo.gil@unican.es}, \url{https://personales.unican.es/gila}).}
\and J. Segura\thanks{Departamento de Matem\'aticas, Estad\'{\i}stica y Computaci\'on, Facultad de Ciencias, Universidad de Cantabria, 39005-Santander, Spain. 
  (\email{javier.segura@unican.es}, \url{https://personales.unican.es/segurajj}).}}
\newcommand*{\addFileDependency}[1]{
  \typeout{(#1)}
  \@addtofilelist{#1}
  \IfFileExists{#1}{}{\typeout{No file #1.}}
}
\begin{document}

\maketitle

\begin{abstract}
 Numerical methods for the computation of the parabolic cylinder $U(a,z)$ for real $a$ and complex $z$ are presented. The 
 main tools are recent asymptotic expansions involving exponential and Airy functions, with slowly varying analytic coefficient functions involving simple coefficients, and stable integral representations; these two main main methods can be complemented with 
 Maclaurin series and a Poincar\'e asymptotic expansion. 
 We provide numerical evidence showing that the combination of these methods is enough for 
 computing the function with $5\times 10^{-13}$ relative accuracy in double precision
  floating point arithmetic.
\end{abstract}

\begin{keywords}
  {Parabolic cylinder functions, Asymptotic expansions, Numerical quadrature, Numerical algorithms}
\end{keywords}

\begin{AMS}
  33C15, 34E20, 33F05
\end{AMS}

\section{Introduction} 
\label{sec1}

We consider the computation of the parabolic cylinder function $U(a,z)$, which is a solution of the homogeneous equation
\begin{equation}  \label{01}
\frac{d^{2}y}{dz^{2}}-\left(\frac{1}{4}z^{2}+a \right)y=0,
\end{equation}
and has the integral representations \cite[Eqs. 12.5.1 and 12.5.4]{NIST:DLMF}
\begin{equation}
\label{02}
U(a,z)=\frac{e^{-\frac{1}{4}z^{2}}}{\Gamma
\left(a+\frac{1}{2} \right)}
\int_0^\infty {t^{a-\frac{1}{2}}
e^{-\frac{1}{2}t^{2}-zt}dt}
\quad \left(\Re(a)>-\tfrac{1}{2} \right),
\end{equation}
and
\begin{equation}
\label{03}
U(-a,z)=\sqrt {\frac{2}{\pi }} e^{
\frac{1}{4}z^{2}}\int_0^\infty {t^{a-\frac{1}{2}}e^{-\frac{1}{2}t^{2}}\cos \left( {zt-\tfrac{1}{2}\pi a+\tfrac{1}{4}\pi } \right)dt} \quad \left(\Re(a)>-\tfrac{1}{2} \right).
\end{equation}
It can also be expressed in terms of the confluent hypergeometric function \cite[Chap. 7, Ex. 10.4]{Olver:1997:ASF} by
\begin{equation}
\label{04}
U(a,z)=2^{-\frac{1}{4}-\frac{1}{2}a}
e^{-\frac{1}{4}z^{2}} U\left( \tfrac{1}{2}a+\tfrac{1}{4},
\tfrac{1}{2},\tfrac{1}{2}z^{2} \right).
\end{equation}

As $z\to \infty$ it has the behavior
\begin{equation}
\label{05}
U(a,z)\sim z^{-a-\frac{1}{2}}
e^{-\frac{1}{4}z^{2}}
\quad \left( {\vert \arg (z)\vert \le \tfrac{3}{4}\pi -\delta } 
\right),
\end{equation}
and as such it is the unique solution of (\ref{01}) that is recessive in the sector $|\arg(z)|\leq \pi /4$, since all other independent solutions are exponentially large in this sector.

An important connection formula is given by \cite[Eq. 12.2.19]{NIST:DLMF}
\begin{equation}
\label{i3}
U(a,z)=-ie^{- a \pi i}U(a,-z)+\frac{\sqrt{2\pi}}{
\Gamma\left(a+\tfrac{1}{2}\right)}e^{(\frac{1}{4}-\frac{1}{2}a)\pi i}U(-a,-iz).
\end{equation}

Parabolic cylinder functions have a number of important mathematical and physical applications, including in the study of the Helmholtz equation \cite[Sect. 12.17]{NIST:DLMF}, the uniform asymptotic approximation of solutions of differential equations having two coalescing turning points \cite{Olver:1975:SOL}, and in approximating contour integrals having a coalescing saddle point and an algebraic singularity \cite[Chap. 22]{Temme:2015:AMF}. 

For numerical methods of evaluating parabolic cylinder functions for real $a$ and $z$ see \cite{Gil:2004:IRC}, \cite{Gil:2006:RPC}, \cite{Gil:2006:CRPC}, \cite{Gil:2011:A914}, \cite{Gil:2011:FAC}. In this paper we apply the new results of \cite{Dunster:2021:UAP} for moderately large $|a|$, complemented with stable integral representations, Maclaurin series and a Poincar\'e asymptotic expansion. With this, we describe how the function $U(a,z)$ can be 
accurately computed for real $a$ and complex $z$, filling a gap in the currently available methods of numerical computation.

\section{Methods of computation}

In this section we describe in detail the methods of computation for $U(a,z)$ 
for real $a$ and complex $z$, aiming at computing
the function for real $a$ and complex $z$ with approximately $15$ correct digits.

We can restrict the computation for values of $z$ in the first quadrant on account of the following. Firstly, by Schwarz reflection formula we have that $U(a,\overline{z})=\overline{U(a,z)}$ and therefore we only need to consider the computation for $\Im z\ge 0$. 
Furthermore, considering the connection formula (\ref{i3}), we can compute $U(a,z)$ for $\pi/2 \le \arg z < \pi$ via
 \begin{equation}
 \label{con}
U(a,z)=-ie^{- a \pi i}\overline{U(a,z_1)}+\frac{\sqrt{2\pi}}{
\Gamma\left(a+\tfrac{1}{2}\right)}e^{(\frac{1}{4}-\frac{1}{2}a)\pi i}U(-a,z_2).
\end{equation}
with $z_1=-\overline{z}$ and $z_2=-iz$, both in the first quadrant. Observe that $\arg(z_1)+\arg(z_2)=\pi/2$
and therefore, because $U(a,z)$ is only recessive if $|\arg z|<\pi /4$,  
one of the terms in (\ref{con}) is recessive as $|z|\rightarrow +\infty$ and the other one is dominant for $z$ in the second quadrant, except
along the ray $\arg z=3\pi/4$. Therefore, we do not expect cancellations between terms of similar magnitude
except close to this ray. We notice that there is an infinite number of zeros
of $U(a,z)$ approaching this ray as $|z|\rightarrow +\infty$ \cite{NIST:DLMF}; loss of relative
accuracy is, of course, unavoidable close to the zeros.

In this section, we assume that the methods are implemented in an arbitrary precision environment, and 
that all the functions involved in the computation (including Airy functions) can be computed with arbitrary accuracy. Later, in section  
\ref{numerical} we will describe double precision implementations in Matlab, for which the reachable accuracy will be more limited, in 
part due to loss of accuracy in the computation of Airy functions (the attainable accuracy being $5\times 10^{-13}$ in this case).

In the case of the asymptotic methods (both the uniform asymptotics we describe next and the standard asymptotics for large $|z|$), 
it is important to bound the errors as the methods are not convergent; the number of terms
must be conveniently fixed for accuracy and the region where the expansions can be used must be determined. For 
the convergent methods (integral representation and Maclaurin series) such analysis is not so esssential, 
and we leave the description of the regions of computation for later, when double precision Matlab implementations are discussed.

\subsection{Asymptotic expansions in terms of Airy functions}
\label{sec2}
We use the standard notation for Airy functions of complex argument 
$\mathrm{Ai}_{l}(z):=\mathrm{Ai}(z e^{-2\pi il/3})$ ($l=0,\pm 1$), and recap the main results found in \cite{Dunster:2021:UAP}. 

Firstly, we define

\begin{equation}
\label{19}
w_{l}(u,\tilde{z}) =\mathrm{Ai}_{l}\left( u^{2/3}\zeta \right) 
\mathcal{A}(u,\tilde{z}) +\mathrm{Ai}_{l}^{\prime }\left(u^{2/3}\zeta\right) \mathcal{B}(u,\tilde{z}) \quad (l=0,\pm 1),
\end{equation}
where 
\begin{equation}
\label{xi}
\frac{2}{3}\zeta^{3/2}=\xi=\frac{1}{2}\tilde{z}\sqrt {\tilde{z}^{2}-1}-\frac{1}{2}\ln\left( 
\tilde{z}+\sqrt {\tilde{z}^{2}-1} \right).
\end{equation}
Here principal branches apply. Thus $\xi$ is a continuous function of $\tilde{z}$ in the plane having a 
branch cut along the interval $(-\infty,1]$, and $\xi \geq 0$ for $1\leq \tilde{z}<\infty$. Note $0\leq \zeta<\infty$ 
for $1\leq \tilde{z}<\infty$. The coefficient functions $\mathcal{A}(u,\tilde{z})$ and 
$\mathcal{B}(u,\tilde{z})$ are slowly-varying functions of complex $\tilde{z}$ and positive $u$ in certain parts of the 
$\tilde{z}$ plane, and can be asymptotically evaluated therein by expansions for large $u$ (see (\ref{20}) and (\ref{21}) below).

We remark that $\zeta$ (unlike $\xi$) is an analytic function of $\tilde{z}$ for $\Re(\tilde{z}) \geq 0$, 
in particular at $\tilde{z}=1$. For $-1<\tilde{z}<1$ we observe that $\zeta$ is negative and that
\begin{equation}
\label{zeta1}
\tfrac{2}{3}(-\zeta)^{3/2}
=\tfrac{1}{2}\arccos(\tilde{z})-\tfrac{1}{2}\tilde{z}\sqrt{1-\tilde{z}^{2}}.
\end{equation}

Now define the coefficient functions $\mathcal{A}(u,\tilde{z})$ 
and $\mathcal{B}(u,\tilde{z})$ implicitly via the pair of \emph{exact} expressions
\begin{equation}
\label{19a}
U\left(-\tfrac{1}{2}u,\sqrt {2u} \, \tilde{z}\right)
=\pi^{1/4}u^{-1/12}
\sqrt {2\Gamma \left( \tfrac {1}{2}u+\tfrac{1}{2}\right) } w_{0}(u,\tilde{z}),
\end{equation}
and
\begin{equation}
\label{19b}
U\left(\tfrac{1}{2}u, i\sqrt {2u} \, \tilde{z}\right)
=\frac{2\pi^{3/4}e^{-(3u+1)\pi i/12}
w_{-1}(u,\tilde{z})}{u^{1/12}
\sqrt {\Gamma \left( \tfrac {1}{2}u+\tfrac{1}{2}\right) } }.
\end{equation}
Next, from \cite[Eq. (22)]{Dunster:2020:ASI},
\begin{equation}
\label{eq52}
w_{0}(u,\tilde{z})+e^{-2\pi i/3}w_{1}(u,\tilde{z})+e^{2\pi i/3}w_{-1}(u,\tilde{z})=0,
\end{equation}
and from (\ref{i3})
\begin{equation}
\label{26l}
U\left(a, -i\sqrt {2u} \, \tilde{z}\right)
=ie^{a \pi i}U\left(a, i\sqrt {2u} \, \tilde{z}\right)
+\frac{\sqrt{2\pi}\,
e^{(\frac{1}{2}a-\frac{1}{4})\pi i}}
{\Gamma\left(a+\tfrac{1}{2}\right)}U\left(-a, \sqrt {2u} \, \tilde{z}\right).
\end{equation}
From these we deduce that
\begin{equation}
\label{19c}
U\left(\tfrac{1}{2}u, -i\sqrt {2u} \, \tilde{z}\right)
=\frac{2\pi^{3/4}e^{(3u+1)\pi i/12}
w_{1}(u,\tilde{z})}{u^{1/12}
\sqrt {\Gamma \left( {\tfrac {1}{2}}u+\tfrac{1}{2}\right) } }.
\end{equation}

We now eliminate $\mathcal{B}(u,\tilde{z})$ from (\ref{19a}) and (\ref{19c}) to get the \emph{exact} expression
\begin{multline}
\mathcal{A}(u,\tilde{z})=
\frac{\sqrt{2}\,\pi^{3/4}e^{-5\pi i/6}u^{1/12}}
{\sqrt {\Gamma \left( \tfrac {1}{2} u+\tfrac{1}{2}\right) }}
U\left(-\tfrac{1}{2}u,\sqrt {2u} \, \tilde{z}\right)
\mathrm{Ai}_{1}^{\prime }\left(u^{2/3}\zeta\right)
\\
-\pi^{1/4} u^{1/12}
e^{-(u+1)\pi i/4}
\sqrt {\Gamma \left( \tfrac {1}{2} u+\tfrac{1}{2}\right) }
U\left(\tfrac{1}{2}u, -i\sqrt {2u} \, \tilde{z}\right)
\mathrm{Ai}^{\prime }\left(u^{2/3}\zeta\right),
\label{Aexact}
\end{multline}
and, equivalently, from eliminating $\mathcal{B}(u,\tilde{z})$ from (\ref{19b}) and (\ref{19c})
\begin{multline}
\mathcal{A}(u,\tilde{z})=
\pi^{1/4}u^{1/12}
\sqrt {\Gamma \left( \tfrac {1}{2} u+\tfrac{1}{2}\right) } \left[
e^{(3u-5)\pi i/12}U\left(\tfrac{1}{2}u, i\sqrt {2u} \, \tilde{z}\right)
\mathrm{Ai}_{1}^{\prime }\left(u^{2/3}\zeta\right) \right.
\\
\left. +e^{(5-3u)\pi i/12}U\left(\tfrac{1}{2}u, -i\sqrt {2u} \, \tilde{z}\right)
\mathrm{Ai}_{-1}^{\prime }\left(u^{2/3}\zeta\right) \right].
\label{Aexact1}
\end{multline}

The representation (\ref{Aexact}) will be used to verify the accuracy of the subsequent asymptotic approximation for $\mathcal{A}(u,\tilde{z})$ that we shall use for $\Re(\tilde{z}) \geq 0$. Note that in this equation $i$ can be replaced by $-i$, along with $\mathrm{Ai}_{1}^{\prime }(u^{2/3}\zeta)$ replaced by $\mathrm{Ai}_{-1}^{\prime }(u^{2/3}\zeta)$, to obtain a third representation.

We shall not use (\ref{Aexact1}) except to note that it verifies that $\mathcal{A}(u,\tilde{z})$ is real for positive $\tilde{z}$, an important property that allows us to use the Schwarz reflection principle to only have to compute this function in the first quadrant to have it computable for $\Re(\tilde{z}) \geq 0$. This is because we shall use (\ref{19b}) for $\Re(\tilde{z}) \geq 0$ to compute $U(a,z)$ for $a\geq 10$ and $\Im(z) \geq 0$. For $\Re(\tilde{z}) < 0$ ($\Im(z) < 0$) we can simply use the above expansion and the Schwarz symmetry relation $U(a,z)=\overline{U(a,\overline{z})}$.

Let us now give the asymptotic expansions for $\mathcal{A}(u,\tilde{z})$ 
and $\mathcal{B}(u,\tilde{z})$ that were derived in \cite{Dunster:2021:UAP}. Firstly define
\begin{equation}
\label{06}
\beta=\frac{\tilde{z}}{\sqrt{\tilde{z}^{2}-1}},
\end{equation}
where the principal branch of the square root is taken, so that $\beta$ is positive for $\tilde{z}>1$ 
and is continuous in the plane having a cut along $[-1,1]$. Thus $\beta \rightarrow 1$ as $\tilde{z} \rightarrow \infty$ in any direction.

Next define
\begin{equation}
\label{07}
\mathrm{E}_{1}(\beta)=\tfrac{1}{24}\beta
\left(5\beta^{2}-6\right),
\end{equation}
\begin{equation}
\label{08}
\mathrm{E}_{2}(\beta)=
\tfrac{1}{16}\left(\beta^{2}-1\right)^{2} 
\left(5\beta^{2}-2\right),
\end{equation}
and for $s=2,3,4\cdots$
\begin{equation}
\label{09}
\mathrm{E}_{s+1}(\beta) =
\frac{1}{2} \left(\beta^{2}-1 \right)^{2}\mathrm{E}_{s}^{\prime}(\beta)
+\frac{1}{2}\int_{\sigma(s)}^{\beta}
\left(p^{2}-1 \right)^{2}
\sum\limits_{j=1}^{s-1}
\mathrm{E}_{j}^{\prime}(p)
\mathrm{E}_{s-j}^{\prime}(p) dp,
\end{equation}
where $\sigma(s)=1$ for $s$ odd and $\sigma(s)=0$ for $s$ even. We remark that $\bar{\mathrm{E}}_{2s}(\bar{\beta})$ is even, $\bar{\mathrm{E}}_{2s+1}(\bar{\beta})$ is odd, and $\bar{\mathrm{E}}_{2s}(\pm 1)=0$.

We define two sequences $\left\{a_{s}\right\} _{s=1}^{\infty}$ and $\left\{\tilde{a}_{s}\right\} _{s=1}^{\infty}$ by $a_{1}=a_{2}=\frac{5}{72}$, $\tilde{a}_{1}=\tilde{a}_{2}=-\frac{7}{72}$, with subsequent terms $a_{s}$ and $\tilde{{a}}_{s}$ ($s=2,3,\cdots $) satisfying the same recursion formula
\begin{equation}
\label{16}
b_{s+1}=\frac{1}{2}\left(s+1\right) b_{s}+\frac{1}{2}
\sum\limits_{j=1}^{s-1}{b_{j}b_{s-j}}.
\end{equation}

Then let
\begin{equation}
\label{17}
\mathcal{E}_{s}(\tilde{z}) =\mathrm{E}_{s}(\beta) +
(-1)^{s}a_{s}s^{-1}\xi^{-s},
\end{equation}
and
\begin{equation}
\label{18}
\tilde{\mathcal{E}}_{s}(\tilde{z}) =\mathrm{E}_{s}(\beta)
+(-1)^{s}\tilde{a}_{s}s^{-1}\xi^{-s}.
\end{equation}

For $\Re(\tilde{z}) \geq 0$ and $u \rightarrow \infty$ the coefficient functions in (\ref{19}) then possess the asymptotic expansions
\begin{equation}
\label{20}
\mathcal{A}(u,\tilde{z}) \sim \left( \frac{\zeta }{\tilde{z}^2-1 }\right) ^{1/4}\exp \left\{ \sum\limits_{s=1}^{\infty}\frac{
\tilde{\mathcal{E}}_{2s}(\tilde{z}) }{u^{2s}}\right\} \cosh \left\{ \sum\limits_{s=0}^{\infty}\frac{\tilde{\mathcal{E}}_{2s+1}(\tilde{z}) }{u^{2s+1}}\right\},
\end{equation}
and
\begin{equation}
\label{21}
\mathcal{B}(u,\tilde{z}) \sim 
\frac{1}{u^{1/3}\left\{\zeta \left(\tilde{z}^2-1 \right) \right\}^{1/4}}
\exp \left\{ \sum\limits_{s=1}^{\infty}\frac{
\mathcal{E}_{2s}(\tilde{z}) }{u^{2s}}\right\} \sinh \left\{ \sum\limits_{s=0}^{\infty}\frac{\mathcal{E}_{2s+1}(\tilde{z}) }{u^{2s+1}}\right\}.
\end{equation}
Here it is understood that first sums in (\ref{20}) and (\ref{21}) are zero if $m=0$. Principal branches are taken for the roots, and both coefficient functions are real for $-1<\tilde{z}< \infty$.

On expanding $\mathcal{A}(u,\tilde{z})$ in the traditional asymptotic expansion we get from (\ref{20})
\begin{equation}
\label{22a}
\mathcal{A}(u,\tilde{z}) \sim
\sum\limits_{s=0}^{\infty}\frac{\hat{\mathrm{A}}_{s}(\tilde{z}) }{u^{2s}}
\quad (u \rightarrow \infty),
\end{equation}
where 
\begin{equation}
\label{22b}
\hat{\mathrm{A}}_{s}(\tilde{z})
=\left( \frac{\zeta }{\tilde{z}^2-1 }\right)^{1/4}
\mathrm{A}_{s}(\tilde{z}) \quad (s=0,1,2,\cdots),
\end{equation}
in which
$\mathrm{A}_{0}(\tilde{z})=1$,
\begin{equation}
\label{22c}
\mathrm{A}_{1}(\tilde{z})
=\tfrac{1}{2}\left\{\tilde{\mathcal{E}}_{1}^{2}(\tilde{z})
+2\tilde{\mathcal{E}}_{2}(\tilde{z})\right\},
\end{equation}
\begin{equation}
\label{22d}
\mathrm{A}_{2}(\tilde{z})
=\tfrac{1}{24}\left\{\tilde{\mathcal{E}}_{1}^{4}(\tilde{z})
+12\tilde{\mathcal{E}}_{1}^{2}(\tilde{z})\tilde{\mathcal{E}}_{2}(\tilde{z})
+24\tilde{\mathcal{E}}_{1}(\tilde{z})\tilde{\mathcal{E}}_{3}(\tilde{z})
+12\tilde{\mathcal{E}}_{2}^{2}(\tilde{z})
+24\tilde{\mathcal{E}}_{4}(\tilde{z})\right\},
\end{equation}
and so on. The singularities of $\tilde{\mathcal{E}}_{s}(\tilde{z})$ at $\tilde{z}=1$ 
cancel out in each of these coefficients, rendering them analytic at the turning point.

Similarly 
\begin{equation}
\label{22e}
\mathcal{B}(u,\tilde{z}) \sim
\frac{1}{u^{4/3}}
\sum\limits_{s=0}^{\infty}\frac{
\hat{\mathrm{B}}_{s}(\tilde{z}) }{u^{2s}}
\quad (u \rightarrow \infty),
\end{equation}
where
\begin{equation}
\label{22f}
\hat{\mathrm{B}}_{s}(\tilde{z})
=\left\{\zeta \left(\tilde{z}^2-1 \right) \right\}^{-1/4}
\mathrm{B}_{s}(\tilde{z})  \quad (s=0,1,2,\cdots),
\end{equation}
with the first two terms being $\mathrm{B}_{0}(\tilde{z})=\mathcal{E}_{1}(\tilde{z})$ and
\begin{equation}
\label{22g}
\mathrm{B}_{1}(\tilde{z})=
\tfrac{1}{6}\left\{
\mathcal{E}_{1}^{3}(\tilde{z})
+6\mathcal{E}_{1}(\tilde{z})\mathcal{E}_{2}(\tilde{z})
+6\mathcal{E}_{3}(\tilde{z})
\right\}.
\end{equation}

The expansions (\ref{22a}) and (\ref{22e}) are easy to compute and, for suitable large $u$ (as described in \cref{sec2.1} below), highly accurate in the right half plane, except near $\tilde{z}=1$ where large cancellations occur. As described in \cite{Dunster:2017:COA} we instead use Cauchy integral representations in this instance. Thus we use
\begin{equation}
\label{22h}
\mathcal{A}(u,\tilde{z}) \sim
\frac{1}{2\pi i }
\sum\limits_{s=0}^{\infty}
\frac{1}{u^{2s}}
\oint_{|t-1|=r_{0} } 
\frac{\hat{\mathrm{A}}_{s}(t) 
dt}{t-\tilde{z}},
\end{equation}
and
\begin{equation}
\label{22i}
\mathcal{B}(u,\tilde{z}) \sim
\frac{1}{2\pi i u^{4/3} }
\sum\limits_{s=0}^{\infty}
\frac{1}{u^{2s}}
\oint_{|t-1|=r_{0} } 
\frac{\hat{\mathrm{B}}_{s}(t) 
dt}{t-\tilde{z}},
\end{equation}
where $r_{0}$ is an arbitrary positive number that is less than 2.

We can store values of these coefficients on the contour (using (\ref{22a}) and (\ref{22e})), along with trapezoidal quadrature, allowing rapid computation of $\mathcal{A}(u,\tilde{z})$ and $\mathcal{B}(u,\tilde{z})$ for varying $\tilde{z}$ and $u$ when 
$\tilde{z}$ is close to or equal to the value $1$. We find the optimal choice for the radius of both loops to be $r_{0}=1$.

In order to avoid wrong branches when computing $\zeta$, we recast (\ref{xi}) and (\ref{zeta1}) into a form we shall use, namely
\begin{equation}
\label{23}
\zeta=\tilde{z}^{4/3} \left[\frac {3}{4}\left\{
\sqrt {1-\frac {1}{\tilde{z}^2}}-{\frac {1}{\tilde{z}^{2}}\ln 
\left( 1+\sqrt {1-\frac {1}{\tilde{z}^2}} \right) }-\frac {\ln (\tilde{z}) }{\tilde{z}^{2}} \right\}\right]^{2/3},
\end{equation}
for $|\tilde{z}| \geq 1$. For $|\tilde{z}| < 1$ we use (\ref{zeta1}) to compute $\zeta$, giving the expression
\begin{equation}
\label{24}
\zeta=-\left[\tfrac {3}{4}\left\{
\arccos(\tilde{z})-\tilde{z}\sqrt{1-\tilde{z}^{2}}\right\}\right]^{2/3}.
\end{equation}

In both (\ref{23}) and (\ref{24}) principal branches are taken for all multi-valued terms, and in particular for the inverse cosine we have

\begin{equation}
\label{25}
\arccos(\tilde{z}) =-i\ln  \left( \tilde{z}+i\sqrt {1-\tilde{z}^{2}} \right).
\end{equation}

For for $|\tilde{z}| \geq 1$ ($\tilde{z} \neq 1$) we likewise compute $\beta$ by
\begin{equation}
\label{26}
\beta=\frac{1}{\sqrt{1-\tilde{z}^{-2}}},
\end{equation}
and for $|\tilde{z}|<1$ and $\Im(\tilde{z}) \gtrless 0$
\begin{equation} 
\label{27}
\beta=\mp i \frac{\tilde{z}}{\sqrt{1-\tilde{z}^{2}}}.
\end{equation}

\subsubsection{Error bounds} \label{sec2.1}
Since the asymptotic expansions are not convergent we need to bound the errors. Here we assume the Airy routine's accuracy is known. Consider for fixed positive integer $n$ the error
\begin{equation}
\Delta_{n}(u,\tilde{z})=
\mathcal{A}(u,\tilde{z}) -
\sum\limits_{s=0}^{n}\frac{\hat{\mathrm{A}}_{s}(\tilde{z}) }{u^{2s}}.
\label{Delta}
\end{equation}

\begin{figure}[htbp]
 \centering
 \includegraphics[trim={0 100 0 100},
 width=0.7\textwidth,keepaspectratio]{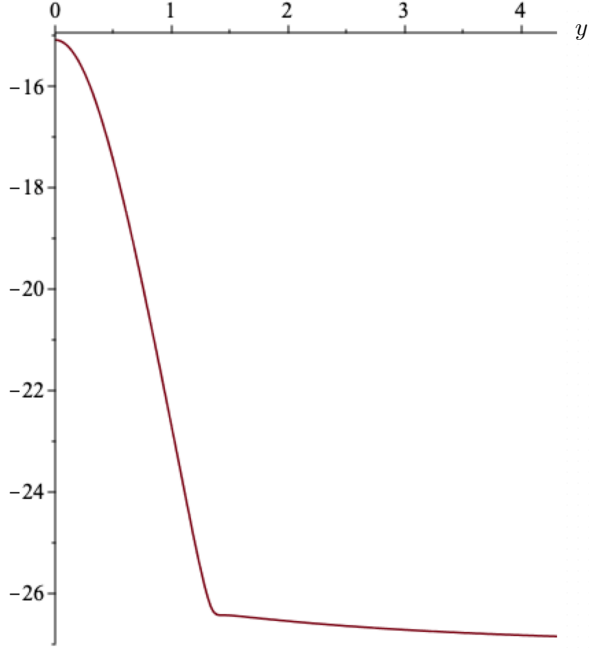}
 \caption{$\log_{10}(|\Delta_{16}(20,iy)|$}
 \label{fig:fig1}
\end{figure}

This is analytic in the right half plane. Moreover, by the Schwarz reflection principle we have $\overline{\Delta_{n}(u,\tilde{z})}
=\Delta_{n}(u,\bar{\tilde{z}})$, and also one can show from the more general error bounds that $\Delta_{n}(u,\tilde{z})\rightarrow 0$ as $\tilde{z} \rightarrow \infty$ along any ray 
$\arg(\tilde{z})=\theta$ where $\theta \in [-\pi/2,\pi/2]$ is fixed. Hence by the maximum modulus theorem the maximum of $|\Delta_{n}(u,\tilde{z})|$ for $\Re(\tilde{z})\geq 0$ occurs on the positive imaginary axis 
$\tilde{z}=i y$, $0 \leq y < \infty$.

In \cref{fig:fig1} we graph $\log_{10}(|\Delta_{16}(20,iy)|$ for $0\leq y \leq 4$ by using (\ref{Aexact}) to compute the exact value of $\mathcal{A}(u,\tilde{z})$ in (\ref{Delta}). 
Observe that the maximum occurs at $y=0$. Thus
\begin{equation}
|\Delta_{16}(20,\tilde{z})|\leq |\Delta_{16}(20,0)|=8.2195\cdots \times 10^{-16},
\end{equation}
for all $\Re(\tilde{z})\geq 0$. Since $\Delta_{n}(u,\tilde{z})$ is monotonically decreasing 
for fixed $n$ and increasing $u$ we see that this bound holds for all $u \geq 20$ and all $\tilde{z}$ in the right half plane, with $n=16$ taken for all values of $u$ and $\tilde{z}$. Of course if $u$ is large we can take a smaller value of $n$ and preserve accuracy, but $n=16$ guarantees accuracy of (\ref{22a}) to at least 15 digits.

Therefore, assuming that the Airy functions and the 
integrals (\ref{22h}) and (\ref{22i}) can be evaluated with sufficient accuracy, we conclude that $U(a,z)$ can be computed with asymptotics 
for $10 \leq a < \infty$, $0 \leq |z| < \infty$ with at least 15 digits.




\subsection{Integral representation}

We start from the integral representation \cite[Eq. 12.5.6]{NIST:DLMF}, which we write as follows:
\begin{equation}
\label{i1}
\begin{array}{l}
U(a,z)=
\displaystyle\frac{e^{\frac{1}{4}z^{2}}}{i\sqrt{2\pi}}\displaystyle\int_{c-i\infty}^{c +i\infty}e^{\phi(t)}dt \quad (c>0,\, 
|\arg(t)|< \tfrac{1}{2}\pi),
\end{array}
\end{equation}
where
\begin{equation}
\label{i1b}
\phi(t)=\tfrac12 t^2 -zt -\alpha \log t,
\end{equation}
and
\begin{equation}
\label{i1c}
\alpha=a+\tfrac12.
\end{equation}
This integral representation, though convergent for all complex values of $\alpha$, is not numerically stable in its present form, particularly
for large $z$. This is observed both from the $z$ dependence in (\ref{i1b}) as well as in the front factor 
$e^{z^2 /4}$ in (\ref{i1}), which does not correspond to the actual behaviour of the function $U(a,z)$ for $|\arg(z)|<3\pi/4$ (see (\ref{05})).

For real $z$, paths of steepest descent were described in \cite{Gil:2004:IRC}. In particular, for $\alpha>0$ and $z \ge 0$
the path of steepest descent crosses the positive saddle point in the direction parallel to the imaginary axis. This motivated the more simple approach considered in \cite{Gil:2006:CRPC} of taking as path of integration a straight line parallel to the imaginary axis and crossing the positive saddle point. Here, we consider a similar approach and take a vertical path through the saddle
in the half plane $\Re(z)>0$. The saddle points come from solving $\phi' (t)=0$ and they are $t=(z\pm \sqrt{z^2+4\alpha})/2$. We take
the saddle point 
\begin{equation}
t_0=\tfrac12 (z+\sqrt{z^2+4\alpha}),
\end{equation}
which has positive real part for any complex $\alpha$ if $\Re(z)>0$; in addition,  if $\alpha$ is real and positive we also have $\Re (t_0)>0$ for any complex $z$ except when
$\Re(z)=0$ and $|z|>2\sqrt{\alpha}$.
 
 When 
$\Re (t_0)>0$ we can use (\ref{i1}) taking the integration path
\begin{equation}
t=t_0+i s,\, s\in (-\infty,+\infty),
\end{equation}
and we write
\begin{equation}
\phi (t)=\phi (t_0)-\tfrac12 s^2+i\alpha f(s/t_0),\, f(w)=w+i\log(1+iw).
\end{equation}
With this we have:


\begin{lemma}
For $\Re(z)>0$ the following holds for any $a\in {\mathbb C}$:
\begin{equation}
\label{i2}
U(a,z)=\displaystyle\frac{1}{\sqrt{2\pi}}
e^{-\frac{1}{4}z\sqrt{z^2+4\alpha}}
e^{\alpha/2}t_0^{-\alpha}
\displaystyle\int_{-\infty}^{+\infty}e^{-\frac12 s^2 +i\alpha f(s/t_0)}ds.
\end{equation}
For real positive $\alpha$ the result holds in ${\mathbb C}\setminus \{z: \Re(z)=0,\, |\Im(z)|>2\sqrt{\alpha}\}$.  
\end{lemma}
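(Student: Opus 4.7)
The plan is to start from the Bromwich-type representation (\ref{i1}) and deform the integration path $\{c+i\tau:\tau\in\mathbb{R}\}$ to the vertical line $\{t_0+is:s\in\mathbb{R}\}$ through the positive-real-part saddle $t_0$, then expand $\phi(t_0+is)$ and exploit the saddle equation to recognize the integrand of (\ref{i2}). The key algebraic fact is that $t_0$ solves $\phi'(t)=t-z-\alpha/t=0$, i.e.\ $t_0^2=zt_0+\alpha$, or equivalently $t_0-z=\alpha/t_0$. This identity is what will collapse the linear-in-$s$ contributions from the quadratic and linear parts of $\phi$ into the $i\alpha s/t_0$ term that assembles with the log term to form $i\alpha f(s/t_0)$.

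Second, I would justify the contour shift. For $\Re(z)>0$, principal branches give $\Re(\sqrt{z^2+4\alpha})\ge 0$, with equality only if $z^2+4\alpha$ is a non-positive real, in which case $\Re(t_0)=\tfrac12\Re(z)>0$ still. Thus $\Re(t_0)>0$ for every $a\in\mathbb{C}$, so the horizontal strip between the lines $\Re(t)=c$ and $\Re(t)=\Re(t_0)$ lies in the open right half-plane, away from the branch cut of $\log t$. Along any vertical segment $t=\sigma+i\tau$ one has $\Re\bigl(\tfrac12 t^2\bigr)=\tfrac12(\sigma^2-\tau^2)$, and this Gaussian decay dominates the linear growth of $-zt$ and the logarithmic growth of $-\alpha\log t$; the connecting horizontal arcs at $\tau=\pm R$ therefore contribute nothing as $R\to\infty$, and Cauchy's theorem legitimizes the deformation.

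Third, substitute $t=t_0+is$, $dt=i\,ds$, to obtain
\begin{equation*}
\phi(t_0+is)=\tfrac12 t_0^2+it_0 s-\tfrac12 s^2-zt_0-izs-\alpha\log t_0-\alpha\log(1+is/t_0).
\end{equation*}
Using $t_0-z=\alpha/t_0$, the linear piece collapses to $i\alpha s/t_0$, and the log part combines with it to give $i\alpha\bigl(s/t_0+i\log(1+is/t_0)\bigr)=i\alpha f(s/t_0)$. Hence $\phi(t_0+is)=\phi(t_0)-\tfrac12 s^2+i\alpha f(s/t_0)$. The factor $i$ from $dt$ cancels the $1/i$ in front of (\ref{i1}), leaving
\begin{equation*}
U(a,z)=\frac{e^{z^2/4+\phi(t_0)}}{\sqrt{2\pi}}\int_{-\infty}^{\infty}e^{-s^2/2+i\alpha f(s/t_0)}\,ds.
\end{equation*}
The prefactor is simplified again via $t_0^2=zt_0+\alpha$: $\phi(t_0)=-\tfrac12 zt_0+\alpha/2-\alpha\log t_0$, and $\tfrac14 z^2-\tfrac12 zt_0=-\tfrac14 z\sqrt{z^2+4\alpha}$ since $2t_0-z=\sqrt{z^2+4\alpha}$. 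This reproduces the constant $e^{-\frac14 z\sqrt{z^2+4\alpha}}e^{\alpha/2}t_0^{-\alpha}$ in (\ref{i2}).

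Finally, the extension to real positive $\alpha$. The only obstruction to the argument is the possibility $\Re(t_0)\le 0$. For $\alpha>0$ real and $z=iy$ with $|y|<2\sqrt{\alpha}$, $z^2+4\alpha=4\alpha-y^2>0$ so $\Re(t_0)=\tfrac12\sqrt{4\alpha-y^2}>0$ and the deformation goes through unchanged; the excluded ray $\{\Re(z)=0,\,|\Im(z)|>2\sqrt{\alpha}\}$ is exactly where $t_0$ becomes purely imaginary. The main obstacle in this proof is the contour-deformation step: one must verify uniformly that the saddle lies in the right half-plane for every admissible $(a,z)$, and that the Gaussian decay along vertical lines really dominates the remaining factors of $e^{\phi(t)}$ at infinity. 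Once this is in place, the rest is an algebraic rearrangement driven by the single identity $t_0^2=zt_0+\alpha$.
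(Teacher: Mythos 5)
Your proposal is correct and follows essentially the same route as the paper: shift the contour in (\ref{i1}) to the vertical line through $t_0$, substitute $t=t_0+is$, and use $t_0^2=zt_0+\alpha$ to collapse the exponent into $\phi(t_0)-\tfrac12 s^2+i\alpha f(s/t_0)$ and the prefactor into $e^{-\frac14 z\sqrt{z^2+4\alpha}}e^{\alpha/2}t_0^{-\alpha}$. You in fact supply more justification than the paper does for the contour deformation (positivity of $\Re(t_0)$ and the Gaussian domination at infinity), which the paper leaves implicit.
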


Oscillations are not completely eliminated in the integrand of (\ref{i2}) because we have not followed the steepest descent contours, but when the path of integration approaches the steepest descent
paths, the oscillations are slow; this is particularly the case for $\Re (z)>0$ and large $|z|$ (in contrast to (\ref{i1})). 

The representation (\ref{i2}) appears to be stable for $\Re(z)>0$ 
except for some values of $z$ close to the imaginary axis and, in particular, as expected, if $\alpha>0$, $z^2+4\alpha<0$ and $z$ is close to the imaginary axis. In such cases, we consider a path of integration slightly shifted to the right in the original variable $t$ in (\ref{i1}) which is the same as considering the substitution $s=\lambda -i\delta$ in (\ref{i2}), 
where $\delta>0$, $\lambda \in (-\infty,+\infty)$. In our implementation we have used $\delta=1$. 
The recursive trapezoidal rule is a good choice for computing these integrals. See section \ref{numerical} for further details.  





\subsection{Power series}

The Airy uniform asymptotic expansions, together with the integral representation, are enough for an accurate computation of $U(a,z)$ for real $a$ and
complex $z$. However, when power series are available, they are an efficient substitute. For instance, for small $|z|$ the numerical evaluation of integral representations slows down, an the Maclaurin series are a convenient replacement. Similarly, for large $|z|$ we can use asymptotic series in powers of $z^{-1}$ instead of the more costly Airy-type expansions or 
the integral representations.

\subsubsection{Maclaurin series}
\label{sec5a}
For small enough $|z|$ we can consider \cite[Eq. 12.4.1]{NIST:DLMF}
\begin{equation}
\label{80}
U(a,z)=U(a,0)u_{1}(a,z)
+U'(a,0)u_{2}(a,z),
\end{equation}
where
\begin{equation}
\label{81}
u_{1}(a,z)=e^{-\frac{1}{4}z^{2}}
\left\{1+(a+\tfrac{1}{2})\frac{z^{2}}{2!}
+(a+\tfrac{1}{2})(a+\tfrac{5}{2})\frac{z^{4}}{4!}+\cdots\right\},
\end{equation}
\begin{equation}
\label{82}
u_{2}(a,z)=e^{-\frac{1}{4}z^{2}}
\left\{z+(a+\tfrac{3}{2})\frac{z^{3}}{3!}
+(a+\tfrac{3}{2})(a+\tfrac{7}{2})\frac{z^{5}}{5!}+\cdots\right\},
\end{equation}
and equivalently
\begin{equation}
\label{83}
u_{1}(a,z)=e^{\frac{1}{4}z^{2}}
\left\{1+(a-\tfrac{1}{2})\frac{z^{2}}{2!}
+(a-\tfrac{1}{2})(a-\tfrac{5}{2})\frac{z^{4}}{4!}+\cdots\right\},
\end{equation}
\begin{equation}
\label{84}
u_{2}(a,z)=e^{\frac{1}{4}z^{2}}\left\{z+(a-\tfrac{3}{2})\frac{z^{3}}{3!}+(a-\tfrac{3}{2})(a-\tfrac{7}{2})\frac{z^{5}}{5!}+\cdots\right\}.
\end{equation}

We could use (\ref{81}) and (\ref{82}) for $|\arg(z)|\leq 3 \pi/4$, and (\ref{83}) and (\ref{84}) otherwise. This is to match the exponential behaviour (\ref{05}) of $U(a,z)$ for large $z$, although of course this is a minor consideration since $z$ is not large.

The series are, of course, convergent for all complex $z$, however they are only interesting numerically 
for small $|z|$, where they converge fast. For example, we have checked that at most 36 terms are required (the least accurate case is $z=\pm 3i$, $a=10$) to give double precision accuracy in the range 
$|z|\le 3$ $|a|\le 10$.

In our numerical implementation, we will add terms of the series for $u_1 (a,z)$ and $u_2 (a,z)$ until
the last term gives a relative contribution less than the required accuracy.

\subsubsection{Large argument asymptotic expansions}
\label{sec6a}

We have for positive integer $n$ \cite[12.9.1]{NIST:DLMF}
\begin{equation}
\label{85}
U(a,z) =
z^{-a-\frac{1}{2}}e^{-\frac{1}{4}z^{2}} \left\{
\sum_{s=0}^{
n-1}(-1)^{s}\frac{{\left(a+\frac{1}{2}\right)_{2s}}}
{s!(2z^{2})^{s}}
+ R_{n}(a,z) \right\}
\quad (|\arg(z)| \leq \tfrac{3}{4}\pi - \delta)
\end{equation}
where
\begin{equation}
\label{85a}
R_{n}(a,z) = \mathcal{O}\left(z^{-2n} \right)
\quad (z \rightarrow \infty, \,|\arg(z)| \leq \tfrac{3}{4}\pi - \delta)
\end{equation}

In this case, differently to Maclaurin series, we can not use as many terms as neeed, because the expansion
is divergent, and it is convenient to bound the error in a certain region, which we do next.

Consider $12 \leq |z| \leq \infty$, $|\arg(z)| \leq \tfrac{1}{2}\pi$ with $n$ and $a$ fixed. By Schwarz symmetry and the maximum modulus theorem the maximum of $|R_{n}(a,z)|$ occurs either on the quarter circle $|z|=12$ lying in the first quadrant, or along the part of the imaginary axis $z=iy$, $12 \leq y < \infty$.

Choose $n=35$. Then for $a=10$ we find this maximum occurs at $z=12i$ with the value $|R_{35}(10,12i)|=1.131\cdots \times 10^{-14}$. For the same $n$ and other $a$ values in $[-10,10)$ we find smaller values of the error term, which indicates that $|R_{35}(a,z)| \leq 1.131\cdots \times 10^{-14}$ for all $a \in [-10,10]$ and $\Re(z)\geq 0$ with $12 \leq |z| \leq \infty$.

Although numerically convincing, it is not so easy to prove rigorously that for fixed $n$ and $z$, with $a \in [-10,10]$, that the maximum of $|R_{n}(a,z)|$ occurs $a=10$. However, by analysing a (Laplace-type) contour integral for $R_{n}(a,z)$ we have the following bound, the proof of which is given in \cref{secB}.
\begin{theorem} \label{thm:thmlargez}
For $0\leq |a| \leq 10$, $12 \leq |z| < \infty$ and $|\arg(z)| \leq \pi/2$
\begin{equation}
\label{106}
\left |R_{35}(a,z)\right| < 6.24 \times 10^{-14}.
\end{equation}
\end{theorem}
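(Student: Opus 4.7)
My plan is to reduce the statement to a standard error bound for the Poincar\'e expansion of Kummer's $U$-function and then evaluate the resulting constant at the extremal parameters. Via (\ref{04}), $U(a,z)$ is linked to $U(a',\tfrac12,Z)$ with $a'=\tfrac12 a+\tfrac14$ and $Z=\tfrac12 z^2$; the Legendre duplication identity $(a')_s(a'+\tfrac12)_s=(a+\tfrac12)_{2s}/4^s$ shows that $R_n(a,z)$ in (\ref{85}) coincides with the remainder of the standard Poincar\'e expansion of $Z^{a'}U(a',\tfrac12,Z)$, and the sector $|\arg z|\le\pi/2$ corresponds to $|\arg Z|\le\pi$, inside the sector of validity of that expansion.

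Starting from
\begin{equation*}
U(a',\tfrac12,Z)=\frac{1}{\Gamma(a')}\int_0^\infty u^{a'-1}(1+u/Z)^{-a'-1/2}e^{-u}\,du\quad(\Re a'>0,\,\Re Z>0),
\end{equation*}
and applying Taylor's theorem with integral remainder to $(1+u/Z)^{-a'-1/2}$ about $u=0$, I would isolate the tail as
\begin{equation*}
R_n(a,z)=\frac{(-1)^n(a+\tfrac12)_{2n}}{4^n\Gamma(a')\,n!\,Z^n}\int_0^\infty u^{a'+n-1}e^{-u}\int_0^1 n(1-\tau)^{n-1}(1+\tau u/Z)^{-a'-n-1/2}\,d\tau\,du.
\end{equation*}
The formal restriction $\Re a'>0$ is lifted by analytic continuation in $a$, since for $n=35$ and $|a|\le 10$ the double integral defines a function entire in $a'$.

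For $|\arg z|\le\pi/4$, i.e., $|\arg Z|\le\pi/2$, the positive real axis already yields $\Re(u/Z)\ge 0$ hence $|1+\tau u/Z|\ge 1$; the inner factor has modulus at most $1$, and the outer integrals evaluate in closed form to the clean estimate $|R_n(a,z)|\le(a+\tfrac12)_{2n}/[n!(2|z|^2)^n]$, i.e., the first omitted term of (\ref{85}). For $\pi/4<|\arg z|\le\pi/2$ I would rotate the $u$-contour to the ray $\arg u=\arg z$; along it $\arg(u/Z)=-\arg z\in[-\pi/2,\pi/2]$, so $\Re(u/Z)\ge 0$ persists and the same modulus bound on the inner factor holds, while $|e^{-u}|$ decays at rate $\cos(\arg z)\ge 0$. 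With $n=35$ the resulting bound is monotone increasing in $a$ on $[-10,10]$, so its maximum occurs at $a=10$; direct evaluation of $(10.5)_{70}/[35!\,(288)^{35}]\approx 5.9\times 10^{-15}$ together with an $\mathcal{O}(1)$ rotation prefactor places the result comfortably below $6.24\times 10^{-14}$.

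The main obstacle is the endpoint $\arg z=\pi/2$, where $|e^{-u}|=1$ along the rotated contour and one cannot simply bound the inner factor by $1$ and appeal to absolute convergence of $\int u^{a'+n-1}e^{-u}\,du$. The remedy is to exploit the extra polynomial decay $|(1+\tau u/Z)^{-a'-n-1/2}|\sim(\tau|u|/|Z|)^{-(a'+n+1/2)}$ as $|u|\to\infty$: a two-scale split of the $u$-integral (with crossover at $|u|\sim|z|$) recovers an integrable integrand and, after careful bookkeeping of the $\tau$-integral, reassembles into a bound essentially equal to the first omitted term. This quantitative matching, rather than the algebra preceding it, is the delicate step; getting the constant down to $6.24\times 10^{-14}$ rather than the naive rotation factor $\cos(\arg z/2)^{-(a'+n+1/2)}\sim 2^{40}$ at $\arg z=\pi/2$ depends entirely on executing that split without losing the Pochhammer cancellation.
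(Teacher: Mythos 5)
Your reduction to the Kummer function via (\ref{04}) and the Taylor-with-integral-remainder identity are sound, and the easy sector $|\arg z|\le\pi/4$ (where $\Re(u/Z)\ge0$ holds on the positive $u$-axis and the remainder is bounded by the first omitted term) is fine. But the proof has a genuine gap exactly where the theorem is hardest: you concede that at $\arg z=\pi/2$ (i.e.\ $\arg Z=\pi$) the rotated contour $\arg u=\arg z$ loses all exponential decay in $e^{-u}$, and you only gesture at a ``two-scale split'' without executing it or extracting a constant. This is not a removable technicality: the numerics in \cref{sec6a} show that the maximum of $|R_{35}(a,z)|$ over the stated region is attained at $z=12i$, so the unproved case is precisely the one that determines the bound. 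Moreover your target of ``essentially the first omitted term'' ($\approx 5.9\times10^{-15}$) at that endpoint is optimistic: the constant $6.24\times10^{-14}$ in (\ref{106}) is roughly ten times the first omitted term precisely because the estimate degrades there, and the classical Olver/DLMF-type remainder bounds for $\tfrac12\pi\le|\arg Z|\le\pi$ carry an extra factor of order $\chi(n)\approx\sqrt{\pi n/2}$. You would at minimum need to prove a bound of that quality with explicit constants and then evaluate it at $n=35$, $a=10$, $|z|=12$; none of this is done.

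The paper sidesteps the endpoint problem by a different choice of representation: Temme's formula (\ref{86}), in which the exponential kernel is $e^{-z^2t}$ rather than $e^{-u}$. Rotating $t\mapsto te^{-2i\theta}$ with $\theta=\arg z$ turns the kernel into exactly $e^{-|z|^2t}$, so full exponential decay survives uniformly for $0\le\theta\le\pi/2$; the price is that the slowly varying factor $f(a,te^{-2i\theta})$ must be controlled beyond the radius of convergence $|t|<\tfrac12$ of its Maclaurin series. The paper handles this by splitting the integral at $t=0.45$, majorizing the series remainder termwise by its value at $-t$ on $[0,0.45]$, and using an explicit elementary majorant $h_n(a,t)$ on the tail, then summing the two numerical contributions ($\approx 5.67\times10^{-14}$ and $\approx 5.95\times10^{-15}$) to obtain the stated constant. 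If you want to salvage your route, the cleanest fix is the substitution $u=2Zt$ in your Kummer integral, which reproduces the paper's representation and its decay-preserving rotation; as written, the claim (\ref{106}) is not established for $\pi/4<|\arg z|\le\pi/2$.
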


For $\Re(z)<0$, $\Im(z)\ge 0$ we can again use (\ref{i3}), and for $\Re(z)<0$, $\Im(z)< 0$ the complex conjugate.

\section{Numerical tests}
\label{numerical}

In this section we implement fixed double precision accuracy versions of the different methods. We choose Matlab as environment for our tests. 
The accuracy and ranges of application of the methods will be more limited than described earlier. There are several new sources of accuracy loss when we compute the functions in fixed precision.

First, of course, we have rounding errors in all the computations, including the elementary evaluations. In the second place, the 
expansions in terms of Airy functions will have more limited accuracy because of the evaluation of these functions; also, the numerical evaluation of the contour integrals (\ref{22h}) and (\ref{22i}) may introduce additional errors. Finally, there is an
unavoidable loss of accuracy for large arguments due to the conditioning of the function $U(a,z)$, which may increase or decrease 
exponentially. This last source of inaccuracy can only be avoided by computing scaled versions, as was done in \cite{Gil:2006:CRPC} 
for the real case, in such a way that the bad conditioning is isolated in a single elementary factor.

As we discussed before, the asymptotic expansions in terms of Airy functions are powerful methods which 
allows one to compute $U(a,z)$ for $|a|>10$ in double accuracy, assuming Airy functions are sufficiently accurate. When we implement these methods in Matlab, where the Airy functions are not so accurately computed, we have to lower the expectations. 

In \cref{asymp}, we represent in the $(|z|,a)$-plane 
the points where $5\times 10^{-13}$ relative accuracy is \emph{not} reached for both types of asymptotic expansions. The expression we used for checking is the recurrence relation 
\cite[12.8.1]{NIST:DLMF}. 
We have limited the figure to the range $|a|<30$ and $|z|<30$ for a better visualization of the regions
of application, and also to avoid unavoidable error degradation due to bad conditioning of $U(a,z)$. 
For this range, we test the recurrence relation for values of $z$ in the principal domain of computation.
More specificially,  we randomly generate values of $|z|\in [0,30]$, for $\arg(z)\in [0,\pi/2]$ and 
$a\in [-30,30]$. We generate $10^5$ points for each of the two methods.

\begin{figure}[htbp]
 \centering
 \includegraphics[width=0.7\textwidth,keepaspectratio]{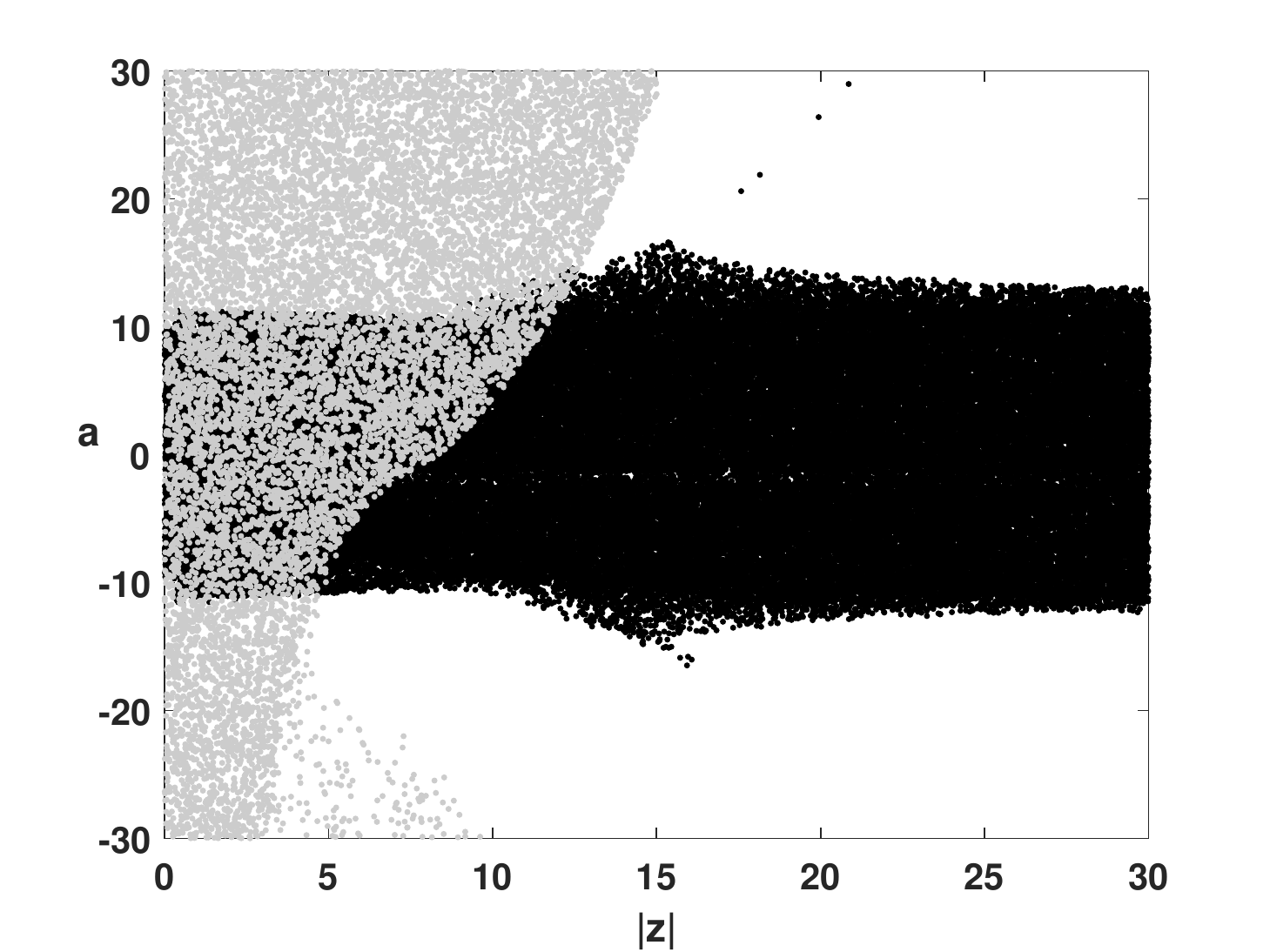}
 \caption{Points where $5\times 10^{-13}$ accuracy is not reached using asymptotic expansions. Black points correspond to Airy-type expansions and gray points to Poincar\'e type}
 \label{asymp}
\end{figure}

The number of terms considered for the Airy-type expansions is as described in \ref{sec2.1}, while for
the Poincar\'e expansion we adopt a more heuristic approach, by adding terms until the last (neglected) term is, 
relative to te accumulated sum, smaller that the accuracy goal; in this process we limit the maximum number of terms to $50$ 
(which is enough, considering Theorem \ref{thm:thmlargez}). In the case of the Airy-type expansions, close to the turning point we
have to compute the coefficients (\ref{22h}) and (\ref{22i}) by contour integration using the trapezoidal rule with $2000$ points.

In \cref{asymp} we observe that the points for $|a|>12$ where $5\times 10^{-13}$ accuracy is not
reached with Airy-type asymptotics can be covered with Poincar\'e asymptotics, from which we conclude
that $U(a,z)$ can be computed with asymptotic expansions for $|a|>12$ with such accuracy. Smaller values
of $|a|$ will be covered by integral representations.

We repeat the same kind of analysis as done in \cref{asymp} for the convergent methods (Maclaurin series and integral representations), and
we show the result in \cref{converg}. As we see, the integral representation produces accurate results for $|a|<20$, while for larger values there is some accuracy loss for small $|z|$. 

\begin{figure}[htbp]
 \centering
 \includegraphics[width=0.7\textwidth,keepaspectratio]{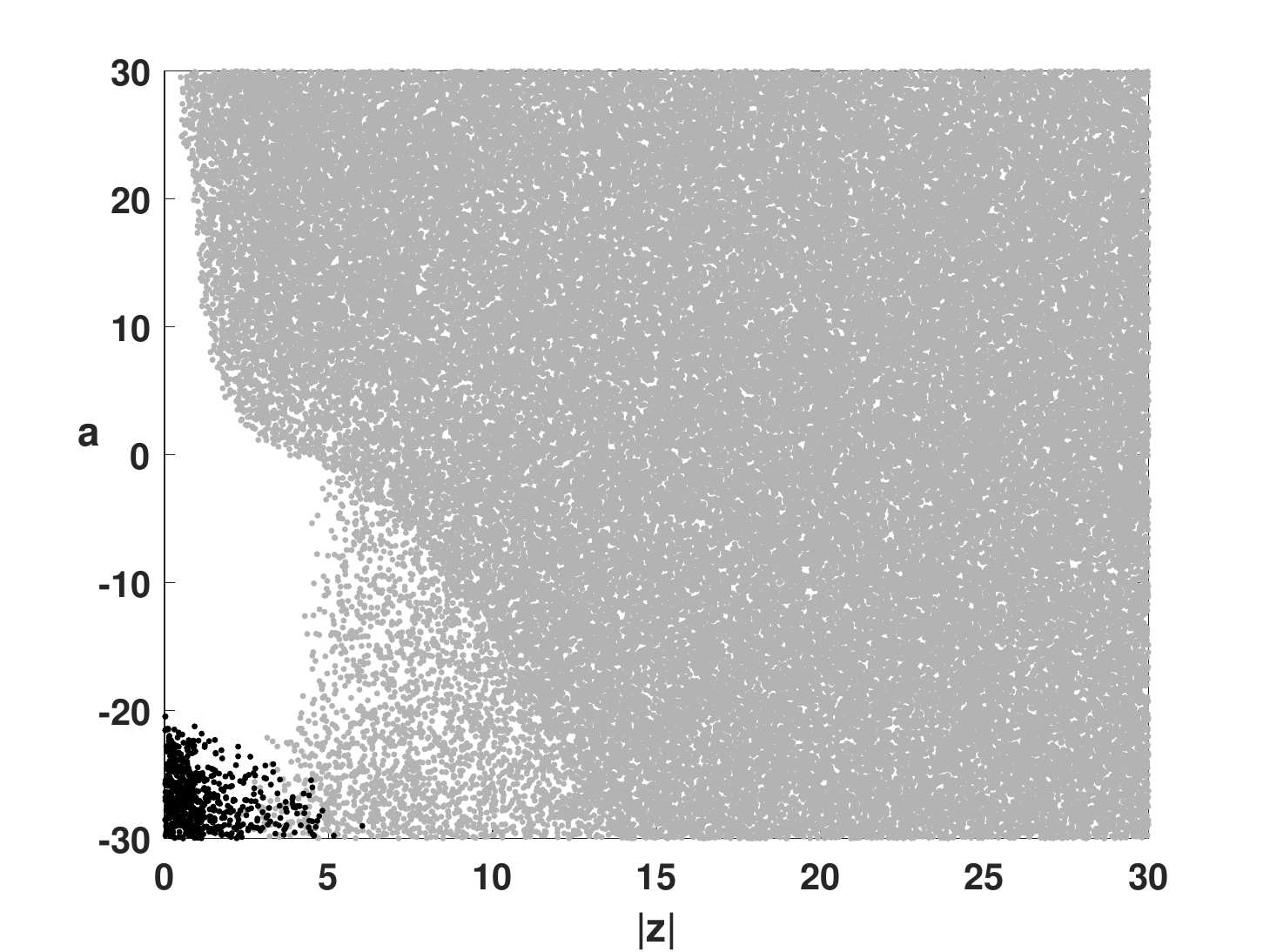}
 \caption{Points where $5\times 10^{-13}$ accuracy is not reached. Black points correspond to the integral representation 
 and gray points to Maclaurin series}
 \label{converg}
\end{figure}

The recursive trapezoidal rule is an efficient method of computation for the integral representations. For computing the integrals, one of course has to
truncate the interval of integration, and we have numerically checked that integrating (\ref{i2}) (or the modified integral with shifting) in $[-15,15]$ is sufficient for our purposes.

From the light areas of the previous two figures, it is clear that it is possible to compute $U(a,z)$ with close to $10^{-13}$ accuracy by combining
asymptotic expansions (both in terms of Airy functions and of Poincar\'e type) with integral representations. For instance, a simple choice of the regions of computation in the principal domain ($0\le \arg z\le \pi/2$) could consist in using asymptotics for large $|z|$ of $|z|>12+\frac{1}{6}|a|$, and in a different case use Airy-type asymptotics if $|a|>20$, and integral representations otherwise. 

As a final check, we have tested this combination of methods, using 
Schwarz reflection and the connection formula (\ref{con}) for $z$ outside the principal domain\footnote{Observe that  when $a+1/2$ is close to a negative integer special precautions should be taken for
computing $\Gamma (a+1/2)$, specially
when $|z|$ is large and $3\pi/4<|\arg z|\le \pi$; the reflection formula for the gamma function helps in
evaluating such values accurately. For $a+1/2\in {\mathbb Z}^{-}$ the second term in this connection formula is zero.}. 
We have randomly generated $10^{6}$ values of 
$|z|\in [0,30]$, of $\arg(z)\in (-\pi, \pi]$ and of $a\in [-30,30]$, and we again test the recurrence relation. With this selection we obtain
a maximum relative error of $4.7 \times 10^{-13}$, and less than at $1\%$ of the $10^6$ points the accuracy was worse than $5\times 10^{-14}$.

\begin{figure}[htbp]
 \begin{minipage}{6cm}
 \includegraphics[width=1.1\textwidth]{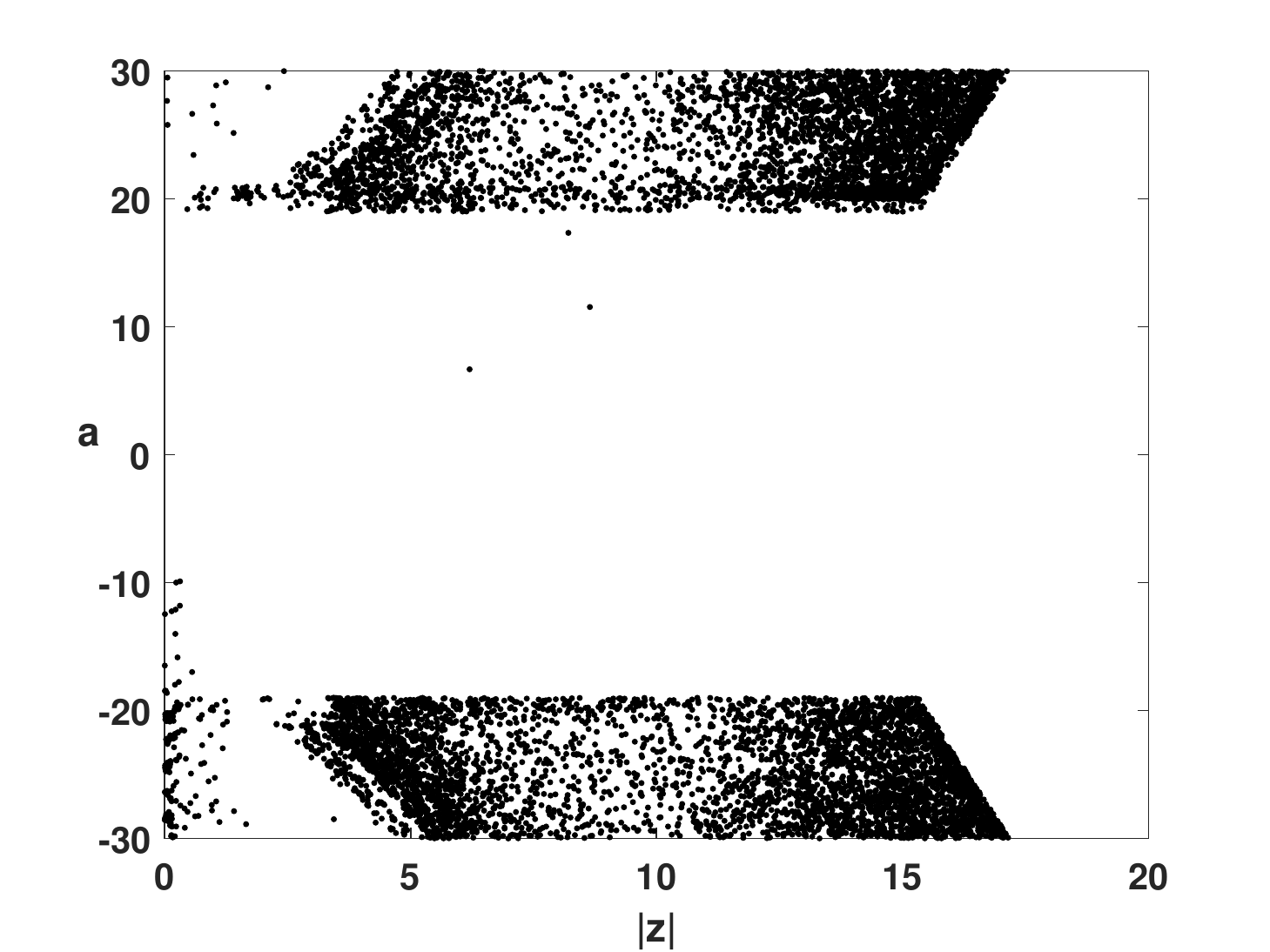}
 \end{minipage}
 \begin{minipage}{6cm}
 \includegraphics[width=1.1\textwidth]{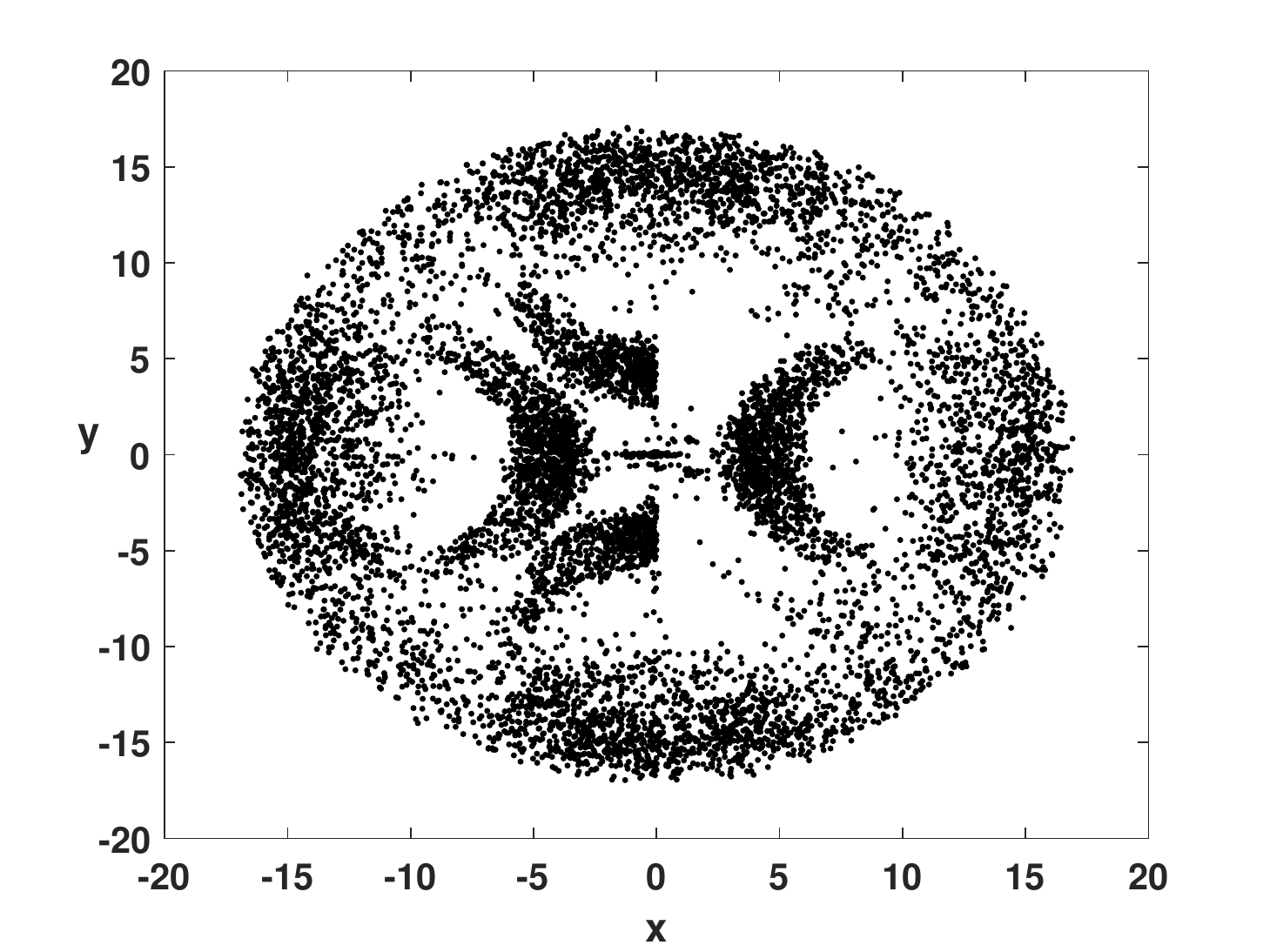}
 \end{minipage}
 \caption{Points where $5\times 10^{-14}$ accuracy is not reached for the combined method. Left: points on the $(|z|,a)$-plane. 
 Right: points in the $z$-plane ($x=\Re z$, $y=\Im z$)}
 \label{algo}
\end{figure}

In \cref{algo} (left) it becomes clear that the largest errors correspond to the Airy-type asymptotics, because it takes place for $|a|>20$. On the other hand, the plot on the right shows that these larger errors are approximatelly limited to $|z|<17$, which points to loss of accuracy due to the computation of Airy functions; Matlab relies on Amos algorithm \cite{Amos:1986:A6A}, which computes Airy functions in terms of modified Bessel functions and appears to give errors with a relative error around $10^{-13}$ for moderate arguments (as shown in \cite{Gil:2002:A8A}); for larger arguments it uses asymptotics and the results could be more accurate. A more accurate implementation of the Airy-type expansions should rely on alternative and more accurate methods of computation. A future implementation using the methods described in \cite{Gil:2002:A8A} and \cite{Gil:2002:CCA} is indeed a possibility.
 
As a final check, we have repeated the analysis, but lowering the value of $|a|$ for which Airy-type asymptotics is used to $|a|>15$. We obtained similar results, but with a slightly worse maximum relative error of $6\times 10^{-13}$, and with $1.4\%$ of the random points giving an accuracy worse than $5\times 10^{-14}$.

\appendix
\section{Proof of \cref{thm:thmlargez}}
\label{secB}

\begin{proof}[Proof of Theorem \ref{thm:thmlargez}]
Assume temporarily that $\Re(a)>-\tfrac{1}{2}$, and use \cite[Sec. 11.2]{Temme:2015:AMF}
\begin{equation}
\label{86}
U(a,z)=
\frac{z^{a+\frac{1}{2}}e^{-\frac{1}{4}z^{2}}}
{\Gamma\left(a+\frac{1}{2} \right)}
\int_0^\infty t^{a-\frac{1}{2}}
e^{-z^2 t} f(a,t) dt
\quad \left(|\arg(z)|<\tfrac{1}{4}\pi \right),
\end{equation}
where
\begin{equation}
\label{87}
f(a,t)=\left\{ {\frac {1}{t} \left( \sqrt {1+2t}-1 \right) }
\right\} ^{a-{\frac{1}{2}}}{\frac {1}{\sqrt {1+2t}}}
\end{equation}
which has the Maclaurin series expansion
\begin{equation}
\label{88}
f(a,t)=\sum_{k=0}^{\infty}(-1)^k
\frac{\left( a+\tfrac{1}{2} \right)_{2k}}
{k!\left( a+\tfrac{1}{2} \right)_{k}}
\left(\frac{t}{2}\right)^{k} \quad (|t| < \tfrac{1}{2})
\end{equation}

Write this as
\begin{equation}
\label{89}
f(a,t)=\sum_{k=0}^{n-1}(-1)^k
\frac{\left( a+\tfrac{1}{2} \right)_{2k}}
{k!\left( a+\tfrac{1}{2} \right)_{k}}
\left(\frac{t}{2}\right)^{k}
+ \eta_{n}(a,t)
\end{equation}
where
\begin{equation}
\label{89a}
\eta_{n}(a,t)=
(-1)^n\frac{\left( a+\tfrac{1}{2} \right)_{2n}}
{n!\left( a+\tfrac{1}{2} \right)_{n}}
\left(\frac{t}{2}\right)^{n}\hat{\eta}_{n}(a,t)
\end{equation}
in which
\begin{equation}
\label{90}
\hat{\eta}_{n}(a,t)=1-\frac{(4n+2a+1)(4n+2a+3) }
{4(n+1)(2n+2a+1)}t
+ \ldots  \quad (|t| < \tfrac{1}{2})
\end{equation}

We get
\begin{equation}
\label{91}
U(a,z) =
z^{-a-\frac{1}{2}}e^{-\frac{1}{4}z^{2}}
\left\{\sum_{s=0}^{n-1}
(-1)^{s}\frac{{\left(a+\frac{1}{2}\right)_{2s}}}
{s!(2z^{2})^{s}}
+ R_{n}(a,z)\right\}
\end{equation}
where
\begin{equation}
\label{92}
R_{n}(a,z)=
\frac{z^{2a+1}}
{\Gamma\left(a+\frac{1}{2} \right)}
\int_0^\infty t^{a-\frac{1}{2}}
e^{-z^2 t} \eta_{n}(a,t) dt
\end{equation}

At this stage we can relax the restriction $\Re(a)>-\tfrac{1}{2}$ to $n+\Re(a)>-\tfrac{1}{2}$, since each coefficient in (\ref{91}) is defined for unrestricted $a$, and on referring to (\ref{89a}) the integral in (\ref{92}) converges under this weaker condition.

Now consider the integral in (\ref{92}). Make the change of variable $t \rightarrow t\exp(-2i \theta)$ where $\theta =\arg(z)$, and deform back to the nonnegative real axis. We have
\begin{equation}
\label{93}
\int_0^\infty t^{a-\frac{1}{2}}
e^{-z^2 t} \eta_{n}(a,t) dt
=e^{-i (2a + 1) \theta}\int_0^\infty t^{a-\frac{1}{2}}
e^{-|z|^2 t} 
\eta_{n}\left(a,t e^{-2i \theta} \right) dt.
\end{equation}
Recalling the definitions (\ref{87}) and (\ref{89}), we see by analytic continuation this certainly holds for $0 \leq \theta =\arg(z) \leq \pi/2$. Now split the integral on the RHS of (\ref{93}) into two, one from $0\leq t \leq 0.45$ and the other from $0.45 \leq t \leq \infty$, and call these $I_{n,1}(a,z)$ and $I_{n,2}(a,z)$, respectively. Thus
\begin{equation}
\label{94a}
R_{n}(a,z)=
\frac{z^{2a+1}}
{\Gamma\left(a+\frac{1}{2} \right)}
\left \{I_{n,1}(a,z)+I_{n,2}(a,z)  \right \}
\end{equation}

For the first one we use from (\ref{89a}) and (\ref{90}) for $0 \leq \theta \leq \pi/2$
\begin{equation}
\label{95}
\left |\eta_{n}\left(a,t e^{-2i \theta)}\right) 
\right | \leq
\frac{\left( a+\tfrac{1}{2} \right)_{2n}}
{n!\left( a+\tfrac{1}{2} \right)_{n}}
\left(\frac{t}{2}\right)^{n}\hat{\eta}_{n}(a,-t) 
\end{equation}
noting that
\begin{equation}
\label{96}
\hat{\eta}_{n}(a,-t)=1+\frac{(4n+2a+1)(4n+2a+3) }
{4(n+1)(2n+2a+1)}t
+ \ldots  \quad (t < \tfrac{1}{2})
\end{equation}
Now by monotonicity
\begin{equation}
\label{97}
\sup_{0 \leq t \leq 0.45}\hat{\eta}_{n}(a,-t)
=\hat{\eta}_{n}(a,-0.45) 
\end{equation}
Next 
\begin{equation}
\label{98}
\left |I_{n,1}(a,z)\right | 
\leq \int_0^{0.45} t^{a-\frac{1}{2}}
e^{-|z|^2 t} \left |
\eta_{n}\left(a,t e^{-2i \theta}\right) \right | dt
\end{equation}
and so for $0 \leq \theta \leq \pi/2$
\begin{multline}
\label{99}
\left |I_{n,1}(a,z)\right | 
\leq \frac{\left( a+\tfrac{1}{2} \right)_{2n}}
{2^{n}n!\left( a+\tfrac{1}{2} \right)_{n}}
\int_0^{0.45} t^{n+a-\frac{1}{2}}
e^{-|z|^2 t} \hat{\eta}_{n}(a,-t) dt \\
\leq
\frac{\hat{\eta}_{n}(a,-0.45) \left( a+\tfrac{1}{2} \right)_{2n}}
{2^{n}n!\left( a+\tfrac{1}{2} \right)_{n}}
\int_0^{0.45} t^{n+a-\frac{1}{2}}
e^{-|z|^2 t}  dt \\
< \frac{\hat{\eta}_{n}(a,-0.45) \left( a+\tfrac{1}{2} \right)_{2n}}
{2^{n}n!\left( a+\tfrac{1}{2} \right)_{n}}
\int_0^{\infty} t^{n+a-\frac{1}{2}}
e^{-|z|^2 t}  dt
=\frac{\hat{\eta}_{n}(a,-0.45) 
\Gamma\left( a+\tfrac{1}{2} \right)\left( a+\tfrac{1}{2} \right)_{2n}}
{2^{n} n!|z|^{2n+2a+1}}
\end{multline}
and hence for the first term on the RHS of (\ref{94a})
\begin{equation}
\label{100}
\frac{|z|^{2a+1}}
{\Gamma\left(a+\frac{1}{2} \right)}
\left |I_{n,1}(a,z)\right |
< \frac{\hat{\eta}_{n}(a,-0.45) 
\left( a+\tfrac{1}{2} \right)_{2n}}
{n!|2z^2|^{n}}
\end{equation}

For the second term one can show from (\ref{87}) and (\ref{89}) that for $\theta \in [0,\pi/2]$
\begin{equation}
\label{101}
\left |\eta_{n}\left(a,t e^{-2i \theta}\right) \right |
\leq h_{n}(a,t)
\end{equation}
where
\begin{equation}
\label{102}
h_{n}(a,t)=
\left\{ {\frac {1}{t} \left | \sqrt {1-2t}-1 \right |}
\right\} ^{a-{\frac{1}{2}}}{\frac {1}{\sqrt {|1-2t|}}}
+\sum_{k=0}^{n-1}
\frac{\left( a+\tfrac{1}{2} \right)_{2k}}
{k!\left( a+\tfrac{1}{2} \right)_{k}}
\left(\frac{t}{2}\right)^{k}
\end{equation}
for $a \geq 1/2$, and
\begin{equation}
\label{103}
h_{n}(a,t)=
\left\{ {\frac {1}{2} \left( \sqrt {1+2t}+1 \right) }
\right\} ^{\frac{1}{2}-a}{\frac {1}{\sqrt{|1-2t|}}}
+\sum_{k=0}^{n-1}
\frac{\left( a+\tfrac{1}{2} \right)_{2k}}
{k!\left( a+\tfrac{1}{2} \right)_{k}}
\left(\frac{t}{2}\right)^{k}
\end{equation}
for $0 \leq a \leq 1/2$. From the definition of $I_{n,2}(a,z)$, namely the RHS of (\ref{93}) with the lower limit $0$ replaced by $0.45$, then we simply have
\begin{equation}
\label{104}
\frac{|z|^{2a+1}}
{\Gamma\left(a+\frac{1}{2} \right)}
\left |I_{n,2}(a,z)\right |
\leq 
\frac{|z|}
{\Gamma\left(a+\frac{1}{2} \right)}
\int_{0.45}^{\infty} \left(z^2 t\right)^{a-\frac{1}{2}}
e^{-|z|^2 t} h_{n}(a,t) dt
\end{equation}

We are considering $0\leq a \leq 10$ and $12 \leq |z| < \infty$, and it is straightforward to show the the RHS of (\ref{100}) and (\ref{104}) are maximized at the extreme values $a=10$ and $|z|=12$. 

Taking $n=35$ we then compute for $a=10$ and $|z|=12$ the values $5.66790\cdots \times 10^{-14}$ and $5.95016 \cdots \times 10^{-15}$ respectively for these bounds. Plugging these into (\ref{94a}), referring to (\ref{91}), and using the Schwarz reflection principal, we arrive at (\ref{106}).
\end{proof}

\section*{Acknowledgments}
The authors acknowledge financial support from Ministerio de Ciencia e Innovación, projects
PGC2018-098279-B-I00 (MCIN/AEI/10.13039/ 501100011033/FEDER "Una manera de hacer Europa") and 
PID2021-127252NB-I00 (MCIN/AEI/10.13039/ 501100011033/FEDER, UE).

\bibliographystyle{siamplain}
\bibliography{biblio}

\end{document}